\tikzset{
>=stealth',
  punktchain/.style={
    rectangle,
    rounded corners,
    % fill=black!10,
    draw=black, thick,
    minimum height=3em,
    text centered,
    on chain},
  line/.style={draw, thick, <-},
  element/.style={
    tape,
    top color=white,
    bottom color=blue!50!black!60!,
    minimum width=8em,
    draw=blue!40!black!90, very thick,
    text width=10em,
    minimum height=3.5em,
    text centered,
    on chain},
  every join/.style={->, thick,shorten >=1pt},
  decoration={brace},
  tuborg/.style={decorate},
  tubnode/.style={midway, right=2pt},
}
\DeclareFontFamily{OT1}{rsfs}{}
\DeclareFontShape{OT1}{rsfs}{n}{it}{<-> rsfs10}{}
\DeclareMathAlphabet{\curly}{OT1}{rsfs}{n}{it}
\DeclareFontFamily{U}{mathb}{\hyphenchar\font45}
\DeclareFontShape{U}{mathb}{m}{n}{
      <5> <6> <7> <8> <9> <10> gen * mathb
      <10.95> mathb10 <12> <14.4> <17.28> <20.74> <24.88> mathb12
      }{}
\DeclareSymbolFont{mathb}{U}{mathb}{m}{n}
\newcommand{\eqnum}{\refstepcounter{equation}\textup{\tagform@{\theequation}}}
\renewcommand\;{\hspace{.6pt}}
\newcommand\C{\mathbb C}
\newcommand\Q{\mathbb Q}
\newcommand\Z{\mathbb Z}
\newcommand\II{\mathbb I}
\newcommand\LL{\mathbb L}
\newcommand\PP{\mathbb P}
\newcommand\cO{\mathcal O}
\newcommand\cE{\mathcal E}
\newcommand\cM{\mathcal M}
\newcommand\cP{\mathcal P}
\newcommand\VW{\mathsf{VW}}
\newcommand\J{\mathsf J}
\renewcommand\({\big(}
\renewcommand\){\big)}
\newcommand{\so}{\ \ext@arrow 0359\Rightarrowfill@{}{\hspace{3mm}}\ }
\newcommand{\rt}[1]{\xrightarrow{\ #1\ }}
\newcommand\To{\longrightarrow}
\newcommand\ot{\leftarrow}
\newcommand\INTO{\ \ar@{^(->}[r]<-.2ex>}
\newcommand\Mapsto{\ \longmapsto\ }
\renewcommand\_{^{}_}
\newfont{\bigtimesfont}{cmsy10 scaled \magstep5}
\newcommand{\bigtimes}{\mathop{\lower0.9ex\hbox{\bigtimesfont\symbol2}}}
\renewcommand\={\ =\ }
\newcommand\udot{^{\bullet}}
\DeclareMathSymbol{\lefttorightarrow}{3}{mathb}{"FC}
\DeclareMathSymbol{\righttoleftarrow}{3}{mathb}{"FD}
\newcommand\vir{\operatorname{vir}}
\newcommand\red{\operatorname{red}}
\newcommand\coker{\operatorname{coker}}
\newcommand\Hom{\operatorname{Hom}}
\renewcommand\hom{\curly H\!om}
\newcommand\Ext{\operatorname{Ext}}
\newcommand\ext{\curly E\hspace{-.3mm}xt}
\newcommand\Hilb{\operatorname{Hilb}}
\newcommand\beq[1]{\begin{equation}\label{#1}}
\newcommand\eeq{\end{equation}}
\newcommand\beqa{\begin{eqnarray*}}
\newcommand\eeqa{\end{eqnarray*}}
\newcommand\arXiv[1]{\href{http://arxiv.org/abs/#1}{arXiv:#1}}
\newcommand\mathAG[1]{\href{http://arxiv.org/abs/math/#1}{math.AG/#1}}
\newcommand\hepth[1]{\href{http://arxiv.org/abs/hep-th/#1}{hep-th/#1}}
\DeclareRobustCommand{\SkipTocEntry}[3]{}
\newcommand\@dotsep{4.5}
\def\@tocline#1#2#3#4#5#6#7{\relax
  \ifnum #1>\c@tocdepth % then omit
  \else
    \par \addpenalty\@secpenalty\addvspace{#2}%
    \begingroup \hyphenpenalty\@M
    \@ifempty{#4}{%
      \@tempdima\csname r@tocindent\number#1\endcsname\relax
    }{%
      \@tempdima#4\relax
    }%
    \parindent\z@ \leftskip#3\relax \advance\leftskip\@tempdima\relax
    \rightskip\@pnumwidth plus1em \parfillskip-\@pnumwidth
    #5\leavevmode #6\relax
    \leaders\hbox{$\m@th
      \mkern \@dotsep mu\hbox{.}\mkern \@dotsep mu$}\hfill
    \hbox to\@pnumwidth{\@tocpagenum{#7}}\par
    \nobreak
    \endgroup
  \fi}
\makeatletter \@addtoreset{equation}{section} \makeatother
\renewcommand{\theequation}{\thesection.\arabic{equation}}
\newtheorem{thm}[equation]{Theorem}
\newtheorem{thm*}{Theorem}
\newtheorem{lem}[equation]{Lemma}
\newtheorem{prop}[equation]{Proposition}
\theoremstyle{definition}
\newtheorem{rmk}[equation]{Remark}
\title{Refined sheaf counting on local $K3$ surfaces}
\author{Richard P. Thomas}
\begin{document}
\maketitle
\begin{abstract} We compute all refined sheaf counting invariants\,---Vafa-Witten, reduced DT, stable pairs and Gopakumar-Vafa\,---\,for all classes on local $K3$ surfaces. Along the way we develop rank 0 Vafa-Witten theory on $K3$ surfaces.

An important feature of the calculation is that the ``instanton contri\-bution"---\,of sheaves supported scheme theoretically on $S$\,---\,to any of the invariants depends only on the square of the class, not its divisibility.
\end{abstract}

%Fills a gap in \cite{KVW}, pointed out by Martijn Kool, by proving a conjecture from \cite{KVW}.

%%%%%%%%%%%%%%%%%%%%%%%%%%%%%%%%%%%%%%%%%%%%%%%%%%%%%%%%%%%%%%%%%%%%%%%%%%%
\addtocontents{toc}{\protect\setcounter{tocdepth}{-1}}
\section*{Introduction}

We study the refined Vafa-Witten invariants $\VW(t)\in\Q[t^{\pm1/2}]$  of \cite{KVW} on a polarised $K3$ surface $(S,\cO_S(1))$. Equivalently\,---\,via the spectral construction relating Vafa-Witten Higgs pairs on $S$ to compactly supported sheaves on the local $K3$ surface $X=S\times\C$ \cite{TT1}\,---\,these are the product of
$$
\frac{t^{1/2}}{(t-1)}\,,\ \text{ where $t$ is the standard weight 1 representation of $\C^*$,}
$$
and the \emph{Nekrasov-Okounkov-twisted\;\footnote{Defined by tensoring the virtual structure sheaf of the moduli space $\cM$ by $K_{\vir}^{1/2}$ \cite{NO}.} $\C^*$-equivariant $K$-theoretic reduced DT invariants} counting \emph{compactly supported sheaves} on $X$.

\begin{thm*} \label{VWt}
\cite[Conjecture 5.43]{KVW} is true: for a Mukai vector $v\in H^*(S,\Z)$,
$$
\VW_v(t)\=\sum_{r|v}\frac{t^{\,v^2/2r-r}\chi\_{-t^r}(\Hilb^{1-v^2/2r^2\!}S)}{[r]_t^2}\,.
$$
\end{thm*}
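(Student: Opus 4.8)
The plan is to compute the reduced DT side by $\C^*$-localization and to recognise the fixed-point contributions as $\chi_y$-genera of Hilbert schemes. By the spectral correspondence recalled in the introduction, it suffices to compute the NO-twisted $\C^*$-equivariant reduced $K$-theoretic DT invariant of compactly supported sheaves on $X=S\times\C$ with Mukai vector $v$, and to multiply the result by $t^{1/2}/(t-1)$. I would take the $\C^*$-action scaling the fibre $\C$ with weight $t$; although the moduli space $\cM=\cM_v(X)$ is non-compact, its $\C^*$-fixed locus is proper, so the invariant is defined by $K$-theoretic virtual localization using the reduced perfect obstruction theory, and equals a sum over components $\cF\subset\cM^{\C^*}$ of the twisted holomorphic Euler characteristics $\chi\big(\cF,\hat{\cO}^{\vir}_{\cF}\otimes K_{\vir}^{1/2}\otimes\Lambda^\bullet(N^{\vir,\mathrm{mov}})^\vee\big)$.

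A $\C^*$-fixed compactly supported sheaf is scheme-theoretically supported on a thickening $S\times\Spec\C[z]/(z^r)$ of the central fibre; equivalently, in the Higgs-pair description its Higgs field is nilpotent. I would show that $\cM^{\C^*}$ decomposes, according to the order $r$ of this thickening, into strata indexed by the divisors $r\mid v$, and that the stratum of thickness $r$ is, after quotienting out the thickening, the moduli space $M_{v/r}(S)$ of Gieseker-stable sheaves on $S$ with Mukai vector $w:=v/r$. For $r=1$ this is the instanton branch $\{\phi=0\}$, while the $r>1$ strata are the monopole branches producing the higher terms of the sum. The delicate new input on $K3$ is to control the rank-$0$ and strictly semistable loci, which appear exactly when $v$, and hence $w$, is imprimitive.

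For each $r$ I would then evaluate the contribution of $M_w(S)$. The $K_{\vir}^{1/2}$-symmetrisation turns it into a balanced, $t^{r}$-palindromic invariant which I expect to be deformation invariant\,---\,concretely the signed $\chi_y$-genus $\chi_{-t^r}$. Invoking Yoshioka's non-emptiness and irreducibility together with the deformation-equivalence theorems for moduli of sheaves on $K3$ surfaces (Bayer--Macr\`{i}, O'Grady, Perego--Rapagnetta), $M_w(S)$ is deformation equivalent to $\Hilb^{1-w^2/2}S$, so the contribution equals $\chi_{-t^r}(\Hilb^{1-w^2/2}S)=\chi_{-t^r}(\Hilb^{1-v^2/2r^2}S)$. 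This is precisely the divisibility-independence emphasised in the abstract: as $w=v/r$ is generally imprimitive, what must be shown is that the instanton invariant depends on $w$ only through $w^2$ and not through its divisibility, so that the singular moduli spaces of imprimitive $w$ still contribute the $\chi_y$-genus of the smooth Hilbert scheme.

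Finally I would match the $\C^*$-weights. The moving normal directions to the thickness-$r$ stratum are the parts $z,z^2,\dots,z^{r-1}$ of the $r$-fold thickening together with the fibre $\C$ itself; after $K_{\vir}^{1/2}$-symmetrisation and multiplication by $t^{1/2}/(t-1)$ these assemble into $t^{\,v^2/2r-r}/[r]_t^2$, the square reflecting the pairing of deformations against obstructions under the self-duality of the threefold obstruction theory. Summing over $r\mid v$ gives the stated formula; as a check, for primitive $v$ only $r=1$ contributes and one reads off $\VW_v(t)=t^{\,v^2/2-1}\chi_{-t}(\Hilb^{1-v^2/2}S)$, the centred $\chi_y$-genus of the instanton moduli space. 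I expect the third step\,---\,the identification with Hilbert schemes\,---\,to be the main obstacle: establishing the divisibility-independence and the passage to Hilbert schemes across the strictly semistable loci, while simultaneously pinning the localization data to the exact $\chi_y$-genus and the precise factor $t^{\,v^2/2r-r}/[r]_t^2$.
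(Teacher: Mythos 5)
There is a genuine gap, and it sits exactly at the point you yourself flag as the main obstacle. Your third step asserts that the thickness-$r$ contribution is $\chi_{-t^r}$ of $M_w(S)$, $w=v/r$, and that $M_w(S)$ is deformation equivalent to $\Hilb^{1-w^2/2}S$ by Yoshioka/Bayer--Macr\`i/O'Grady/Perego--Rapagnetta. This is false precisely in the only hard case: when $v$ is imprimitive, $w$ admits strictly semistable sheaves, $M_w(S)$ is singular and non-fine, and it is \emph{not} deformation equivalent to a Hilbert scheme of points (O'Grady's example $w=2w_0$ gives the ten-dimensional OG10 geometry; Perego--Rapagnetta prove deformation equivalence among such singular moduli spaces, never to Hilbert schemes). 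Worse, in the strictly semistable case there is no perfect obstruction theory on $M_w(S)$, so ``the contribution equals $\chi_{-t^r}(M_w(S))$'' is not even a defined quantity: the invariant must be defined through Joyce--Song pairs and extracted via $P_v(n,t)=(-1)^{\chi(v(n))-1}[\chi(v(n))]_t\,\VW_v(t)$, which is itself nontrivial wall-crossing content \cite{Liu}. The paper's actual route to divisibility independence is completely different and is not replaceable by a deformation-equivalence claim: derived autoequivalences reduce to dimension-$1$ classes $(0,\beta,\chi)$; $K$-theoretic wall crossing, trivial except for a single term because $H^2(\cO_S)\ne0$, converts the sheaf count into the difference of stable pair invariants $P_{\beta,\chi}(t)-P_{\beta,-\chi}(t)$; and then the description of $P_\chi(S,\beta)$ as the zero locus of a section of a tautological bundle on $\Hilb^n(S)\times|\beta|$ \cite{KT1} lets one evaluate by \cite{KT2} as universal expressions in $\beta^2$ and $\chi$ alone, after which one may assume $\beta$ primitive and quote Kawai--Yoshioka \cite{KY}. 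Your proposal supplies no substitute for this universality step, which is where the theorem's ``minor miracle'' is actually proved.

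There is also a secondary gap in your localization set-up. A $\C^*$-fixed compactly supported sheaf on $X$ is a sheaf with nilpotent Higgs field, i.e.\ a chain of sheaves on $S$ with arbitrary layer classes $v_1+\dots+v_r=v$; it is in general \emph{not} of the uniform form $p^*E\otimes\cO_{rS}$, and thickness $r$ does not force $r\,|\,v$. So your stratification of $\cM^{\C^*}$ by divisors $r\,|\,v$ with strata $M_{v/r}(S)$ omits most of the fixed locus. What rescues this in the paper is the cosection vanishing theorem of \cite{KKV} and \cite{KVW}: on every non-uniformly-thickened component the flow translating the top layer gives a nowhere-zero cosection of the obstruction sheaf, killing its contribution. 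Even granting that, the $\C^*$-fixed obstruction theory on the uniform stratum is \emph{not} the obstruction theory of Joyce--Song pairs on $S$; the comparison of the two (\cite{KKV} Proposition 4, \cite{KVW} Section 5.4) is what actually produces the substitution $t\mapsto t^r$ and the factor $1/[r]_t$ for pairs, with the second factor $1/[r]_t$ arising only after dividing out $[\chi(v(n))]_t$ via Theorem \ref{numer} --- not from the heuristic normal-weight count you give, which as stated would at best be checked pointwise and does not account for the pairs rigidification needed in the semistable case.
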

Here $[r]\_t$ is the quantum integer $(t^{r/2}-t^{-r/2})/(t^{1/2}-t^{-1/2})$ and the Hirzebruch genera $\chi\_{-t}(\Hilb^dS)$ are, by results of G\"ottsche-Soegel \cite{GS}, the coefficients of the unique Jacobi cusp form of index 10 and weight 1,
\beq{jac}
\sum_{d=0}^\infty t^{-d}\chi\_{-t}(\Hilb^dS)\;q^{d-1}\=\frac1q\prod_{d=1}^\infty\frac1{(1-q^d)^{20}(1-tq^d)^2(1-t^{-1}q^d)^2}\,.
\eeq

Theorem \ref{VWt} is standard when the Mukai vector $v$ is primitive, so that any $\C^*$-invariant compactly supported semistable sheaf on $X$ is the push forward of a stable sheaf on $S$, for a generic choice of the polarisation $\cO_S(1)$. Then the sum collapses to $t^{-d}\chi\_{-t}(\Hilb^dS)$ where $v^2=2-2d$. But for general $v$ it seems a minor miracle. It was conjectured in \cite[Conjecture 5.43]{KVW} and even claimed in \cite[Theorem 5.44]{KVW} when $v$ a prime multiple of a primitive class. Martijn Kool pointed out a gap in its proof\;\footnote{There was basically no proof.} which Theorem \ref{VWt} plugs. It also proves \cite[Conjecture 4.6]{GK} and the $K$-theoretic $SU(r)/PSU(r)$ S-duality of \cite[Equation 1]{JK} for prime rank $r$ on the $K3$ surface $S$.

We will see Theorem \ref{VWt} is basically equivalent to (refinements of) a number of other remarkable results about curve and sheaf counting invariants on $K3$ surfaces depending  only on the square of the curve class, \emph{not its divisibility}:\footnote{An example of a more difficult conjecture of this sort is \cite[Conjecture C2]{OP}, proved in some cases in divisibility 2 in \cite{BB} but otherwise still open.}
\begin{itemize}
\item the stable pair invariants of $S$, computed in \cite{KT2},
\item the reduced Gopakumar-Vafa invariants of $X$, computed in \cite{KKV},
\item the DT invariants counting semistable dimension $1$ sheaves on $X$ \nolinebreak(the ``$\chi$-independence conjecture" of Joyce and Toda) computed in \cite{MT}.
\end{itemize}

Specialising Theorem \ref{VWt} at $t^{1/2}=1$ recovers the formulae of \cite{MT, To1} for the numerical invariants of local $K3$ surfaces. Ultimately the proof here is simpler due to progress in wall crossing technology. Because \cite{MT} was underpinned by the Joyce-Song wall crossing formula used in \cite{To1, TodaBPS} the computations had to go via the \emph{compact} Calabi-Yau 3-fold $S\times E$, with $E$ an elliptic curve. Then $E$ had to be degenerated to a rational elliptic curve and normalised to $\PP^1$, to which $\C^*$ localisation could be applied.
Here we use the $K$-theoretic wall crossing formula of Liu and Kuhn-Liu-Thimm \cite{Liu, KLT} which,  for local $K3$ surfaces, has two simplifying advantages:
\begin{itemize}
\item it applies in non-compact and equivariant settings, so we can work directly on $X=S\times\C$, and
\item it uses virtual cycles (in fact virtual structure sheaves) instead of weighted Euler characteristics, so nonvanishing $H^2(\cO_S)$ terms in the obstruction theory for sheaves on $X$ lead to most terms in the wall crossing formula vanishing.
\end{itemize}
Georg Oberdieck has derived related results by similar methods for local Enriques surfaces \cite{Ob3}.

\subsection*{Plan}
Since refined Vafa-Witten theory on $S$ is only defined in positive rank in \cite{KVW} we develop the rank 0 theory in Section \ref{0VW}. (As in \cite{KVW} we use Joyce-Song pairs to handle strictly semistable sheaves.) Later, in Section \ref{inst}, this allows us to use autoequivalences of the derived category of $S$ to turn any sheaf counting problem into one counting \emph{dimension 1 sheaves}.

In Section \ref{VMCF} we explain (and slightly extend) a vanishing theorem and multiple cover formula from \cite{KVW, KKV}. This reduces the computation to one that is scheme theoretically supported on the zero section $S\subset X$. Some wall crossing analysis in Section \ref{inst} changes it from a computation with Joyce-Song pairs of dimension 1 sheaves to one with \emph{stable pairs} \cite{PT1}. 

The upshot is that it is enough to compute all refined reduced stable pair invariants of $S$. These have the advantage (over Joyce-Song pairs) that their moduli space is naturally cut out of a smooth ambient space by a section of a vector bundle \cite{KT1}, making direct calculation with the virtual cycle possible. Indeed all the pieces for this computation were already in the literature, as we review in Section \ref{S}. Results of \cite{KT2} show the computation is blind to the divisibility of the class we are working with. Thus we can work with primitive irreducible classes, quickly reducing the computation to a celebrated calculation of Kawai-Yoshioka \cite{KY}. The final answer gives all refined stable pairs invariants in equations \eqref{MCFp} and \eqref{KY}:
$$
P_{\beta,\chi}(t)\=\sum_{d|(\beta,\chi)}\frac{(-1)^{\chi-\chi/d}}{[d]\_t}P^{\,h_d}_{\frac\chi d}(t^d),
$$
where $2h_d-2=\beta^2/d^2$ and the $P^h_\chi(t)$ are determined by the product formula
$$
\(q^{-1}+[2]\_t+q\)\ \sum_{h=0}^\infty\ \sum_{\chi=1-h}^\infty P^h_\chi(t)\,u^hq^\chi\=\hspace{53mm}\vspace{-2mm}
$$
$$
\scalebox{0.82}{$\displaystyle{\prod_{n\geq1}\frac{1}{\(1+t^{-\frac12}qu^n\)\(1+t^{\frac12}q^{-1}u^n\)\(1+t^{\frac12}qu^n\)\(1+t^{-\frac12}q^{-1}u^n\)(1-u^n)^{18}(1-tu^n)(1-t^{-1}u^n)}\,.}$}\vspace{1mm}
$$

\begin{rmk}
Cohomological DT theory gives yet more refined invariants, very roughly modelled on the Hodge polynomial $\sum h^{i,j}u^iv^j$ of the moduli space\,---\,as studied for $K3$ surfaces in \cite{DHS}, for instance. Specialising to $u=-t,\,v=-1$ gives the $\chi\_{-t}$ refinement of the DT invariant which\,---\,for local Fano surfaces and local K3 surfaces\,---\,is expected to coincide with the Nekrasov-Okounkov refinement considered in this paper.\footnote{This expectation has been proved for local Fano surfaces in \cite[Theorem 5.15]{KVW}. For local $K3$ surfaces it would be a refinement of the equality $\mathsf{vw=VW}$ of \cite[Theorem 7.10]{MT}. (It is not true for general type surfaces \cite[Section 1.11]{TT1} or Enriques surfaces \cite{ObE}.) Note Descombes \cite{De} has proved a general result relating the $\chi\_{-t}$ and Nekrasov-Okounkov refinements of \emph{compact} moduli spaces.} This is different from the $u=v=t^{1/2}$ refinement considered in \cite{KKP}, and indeed the resulting product formulae \eqref{KY} and \cite[Equation 5.1]{KKP}, while formally similar, are certainly different.
\end{rmk}

\noindent\textbf{Acknowledgements.} I am most grateful to Martijn Kool for spotting the gap in \cite{KVW} that lead to this paper, and for very useful correspondence. I thank Henry Liu and Felix Thimm for $K$-theoretic wall crossing discussions, Pierre Descombes, Ben Davison and Georg Oberdieck for refined DT discussions, and Davesh Maulik and Rahul Pandharipande for years of discussions.
The work was supported by a Royal Society research professorship.
\tableofcontents
\vspace{-1cm}

\addtocontents{toc}{\protect\setcounter{tocdepth}{1}}
\section{Vafa-Witten invariants in all ranks}\label{0VW}
The proof of Theorem \ref{VWt} will pass through rank 0 VW invariants on $S$ (corresponding in DT language to sheaves on $X$ whose support has dimension $<2$). We give a brief explanation of how these can be defined (on $K3$ surfaces only), since the definitions in \cite{TT1, TT2, KVW} assumed rank $\ge1$.

\subsection{Stable case}
Fix a Chern character in $H^*_c(X,\Q)$ of compactly supported sheaves $E$ on $X$, or equivalently the Mukai vector $v\in H^*(S,\Z)$ of $p_*\;E$, where $p$ is the projection $X=S\times\C\to S$. For simplicity we begin by defining the invariants when $v$ is chosen so that semistability implies stability. We consider the moduli space $\cM_v$ of compactly supported stable sheaves on $X$ divided by the action of translation,
$$
\cM_v/\C.
$$
Let $\cE$ denote a (possibly twisted) universal sheaf on $\cM_v\times X$ and let $\pi$ be the projection $\cM_v\times X\to\cM_v$. Then the Atiyah class map of \cite{HT}
$$
\(\tau^{[1,2]}R\hom_\pi(\cE,\cE)\)^\vee[-1]\To\LL_{\cM_v}
$$
defines a symmetric perfect obstruction theory on $\cM_v$ with virtual dimension zero and virtual tangent bundle $\tau^{[1,2]}R\hom_\pi(\cE,\cE)[1]$. The additive action of $\C$ defines a nonzero trivial tangent direction in this complex
\beq{1}
\cO_{\cM_v}\otimes t\rt{\partial_t}\ext^1_{\pi}(\cE,\cE)\rt{h^0}\tau^{[1,2]}R\hom_\pi(\cE,\cE)[1].
\eeq
Applying Serre duality and tensoring by $\omega_X^{-1}\cong\cO_X\otimes t$ we also get a trivial cosection of the obstruction sheaf,
\beq{2}
\tau^{[1,2]}R\hom_\pi(\cE,\cE)[1]\To\cO_{\cM_v}.
\eeq
Since the composition of \eqref{1} and \eqref{2} is zero we can remove both terms from the virtual tangent bundle (taking the double cone on them) to produce a perfect obstruction theory on $\cM_v/\C$,
\beq{pot}
\(R\hom_\pi(\cE,\cE)_\perp\)^\vee[-1]\To\LL_{\cM_v/\C}.
\eeq
See \cite[Section 3]{Ob} for details (in the very similar situation where $\C$ is replaced by an elliptic curve). We use the notation $\perp$ because, when the dimension of the sheaves is 2, the above virtual tangent bundle is isomorphic to the one $R\hom_\pi(\cE,\cE)_\perp[1]$ constructed in \cite{TT1} by a different method using the central fibre of a centre of mass map $\cM_v\to\C$ in place of $\cM_v/\C$.

The symmetric perfect obstruction theory \eqref{pot} endows $\cM_v/\C$ with a virtual structure sheaf $\cO_{\cM_v/\C}^{\vir}$ \cite{CFK,FG}. Picking a square root of the virtual cotangent bundle\footnote{While such ``orientation data" has been shown to exist it is simpler to work in $K\(\cM_v,\Z\big[\frac12\big]\)$ where a canonical square root $K_{\vir}^{1/2}$ exists \cite[Lemma 5.1]{OT}.}
$$
K_{\vir}\ :=\ \det\Big(\(R\hom_\pi(\cE,\cE)_\perp\)^\vee[-1]\Big)\=
\det\(R\hom_\pi(\cE,\cE)_\perp\)
$$
and the refined VW invariants are then
\beq{VWdef}
\VW_v(t)\ :=\ \chi\_t\Big(\cO_{\cM_v/\C}^{\vir}\otimes K_{\vir}^{1/2}\Big)
\=\chi\_t\!\left(\frac{\cO_{(\cM_v/\C)^{\C^*}}^{\vir}\otimes K_{\vir}^{1/2}}{\Lambda\udot(N^{\vir})^\vee}\right).
\eeq
Here $\chi\_t$ denotes the $\C^*$-equivariant index\,---\,the character of the virtual representation $H^*\(\cO_{\cM_v/\C}^{\vir}\otimes K_{\vir}^{1/2}\)$ with its natural $\C^*$ action\footnote{The $\C^*$ action on $X$ induces one on $\cM_v$ which commutes with the $\C$ action and so descends to $\cM_v/\C$. The obstruction theory \eqref{pot} is also naturally $\C^*$-equivariant.}\,---\,and the right hand side is produced by the localisation formula. The result is a rational function in $t^{1/2}$ invariant under $t^{1/2}\ot\hspace{-2mm}\to t^{-1/2}$ \cite[Proposition 2.27]{KVW}.

\subsection{Semistable case}\label{ss}
In general the possible presence of semistable sheaves makes $\cM_v$ a stack, so we rigidify as in \cite{KVW} by using Joyce-Song pairs $(E,s)$. Here
\begin{itemize}
\item $E$ is a compactly supported sheaf on $X$,
\item $s\in H^0(E(n))$
\end{itemize}
for some fixed $n\gg0$, satisfying the stability conditions
\begin{itemize}
\item $E$ is semistable,
\item $s$ factors through no semi-destabilising proper subsheaf $F\subset E$.
\end{itemize}
For fixed Mukai vector $v\in H^*(S,\Z)$ of the sheaves $p_*\;E$, there is a fine moduli space $\cP_{v,n}$ of these Joyce-Song pairs. It is only quasi-projective but its $\C^*$ fixed locus is projective. It also carries a commuting $\C$ action by translation.

A Joyce-Song pair determines a 2-term complex $I\udot:=\{\cO(-n)\to E\}$ whose trace-free self-Exts govern the deformation-obstruction theory of $\cP_{v,n}$. Over $\pi\colon X\times\cP_{v,n}\to\cP_{v,n}$ there is a universal sheaf $\cE$ and universal pair $\II\udot=\{\cO_{X\times\cP_{v,n}}\to\cE\}$ whose Atiyah class defines a perfect obstruction theory
$$
R\hom_\pi(\II\udot,\II\udot)_0^\vee[-1]\To\LL_{\cP_{v,n}}.
$$
It is symmetric of virtual dimension zero. The additive action of $\C$ defines a nonzero trivial tangent direction
$$
\cO_{\cP_{v,n}}\otimes t\rt{\partial_t}\ext^1_{\pi}(\II\udot,\II\udot)\_0\rt{h^0}R\hom_\pi(\II\udot,\II\udot)\_0[1].
$$
Applying Serre duality and tensoring by $\omega_X^{-1}\cong\cO_X\otimes t$ we also get a trivial cosection of the obstruction sheaf,
\beq{vec}
R\hom_\pi(\II\udot,\II\udot)\_0[1]\To\cO_{\cP_{v,n}}.
\eeq
Removing both by double cone as before gives a perfect obstruction theory on $\cP_{v,n}/\C$,
$$
\(R\hom_\pi(\II\udot,\II\udot)\_\perp\)^\vee[-1]\To\LL_{\cP_{v,n}/\C}.
$$
When rank$\;(p_*\;E)\ge1$ this agrees with the perfect obstruction theory of \cite{TT2} constructed using the central fibre of a centre-of-mass map $\cP_{v,n}\to\C$.

Just as above the perfect obstruction theory endows $\cP_{v,n}/\C$ with a virtual structure sheaf $\cO_{\cP_{v,n}/\C}^{\vir}$, while orientation data gives a square root of the virtual cotangent bundle
$$
K_{\vir}\ :=\ \det\(R\hom_\pi(\II\udot,\II\udot)_\perp\).
$$
Thus we get refined pairs invariants 
$$
P_v(n,t)\ :=\ \chi\_t\Big(\cO_{\cP_{v,n}/\C}^{\vir}\otimes K_{\vir}^{1/2}\Big)\=\chi\_t\left(\frac{\cO_{(\cP_{v,n}/\C)^{\C^*}}^{\vir}\otimes K_{\vir}^{1/2}}{\Lambda\udot(N^{\vir})^\vee}\right).
$$
It was conjectured in \cite{KVW} and proved in \cite{Liu} that
\beq{indep}
P_v(n,t)\,/\,[\chi(v(n))]\_t \ \text{ is independent of }n
\eeq
so that we may define the refined Vafa-Witten invariants by\footnote{The (much more complicated) general formula simplifies here because $H^2(\cO_S)\ne0$: see Section \ref{wc} for an explanation.}
\beq{conj}
P_v(n,t)\=(-1)^{\chi(v(n))-1}\big[\chi(v(n))\big]_t\VW_v(t).
\eeq
When semistability and stability coincide this recovers the previous definition \eqref{VWdef} by \cite[Proposition 5.5]{KVW}.
 
\section{Vanishing and multiple cover formula}\label{VMCF}
By \cite[Corollary 1]{KKV} and \cite[Theorem 5.34]{KVW} the only $\C^*$-fixed pairs which contribute to the invariants (\ref{VWdef},\ \ref{conj}) are ``\emph{uniformly thickened}" in the $\C$ direction: they are determined by a pair $(E,s)$ on $S$ and a thickness $d$ defining the pair
\beq{unif}
\(p^*E\otimes\cO_{dS},\,(p^*s)|_{dS}\)
\eeq
supported on $dS\subset X$\,---\,the scheme-theoretic thickening of the zero section $S\subset X$ defined by the $d$th power of its ideal.

\begin{rmk}
For all other pairs there is a nowhere vanishing vector field on $\cP_{v,n}/\C$ which, by Serre duality and the symmetry of the obstructon theory, defines a nowhere zero cosection of the obstruction sheaf, forcing the invariants to vanish. The vector field arises from a flow which translates the ``$d$th layer" of the underlying sheaf along the $\C$ direction while fixing $p^*E\otimes\cO_{(d-1)S}$. The induced dual cosection is proportional to \eqref{vec} (and so zero on the double cone) precisely on the uniformly thickened pairs. See \cite[Section 5]{KKV} or \cite[Section 5.2]{KVW}.
\end{rmk}

For each fixed $d$ the construction \eqref{unif} makes the moduli space of $d$-times uniformly thickened pairs isomorphic to the moduli space of Joyce-Song pairs supported scheme-theoretically on $S$ with Mukai vector $v/d$,
\beq{uniform}
\left(\cP_{(v/d)^d,\,n}\,\big/\C\right)^{\!\C^*}\ \cong\ \(\cP_{(v/d)^1,\;n}/\C\)^{\C^*}.
\eeq
Here we use the obvious partition notation $(v/d)^d=(v/d,v/d,\dots,v/d)$ of \cite[Section 5.4]{KVW} to specify how the charge $v$ is distributed over the layers of the thickening $dS$. The $\C^*$-fixed perfect obstruction theory on the left hand side of \eqref{uniform} is \emph{not the same} as the one on the right hand side, but is determined by it by a simple formula \cite[Proposition 4]{KKV}, \cite[Section 5.4]{KVW}. This quickly gives
\beq{MCFP}
(-1)^{\chi(v(n))-1}P_{(v/d)^d}(n,t)\=\frac{(-1)^{\chi\(\frac1dv(n)\)-1}}{[d]\_t}P_{(v/d)^1}\(n,t^d\)
\eeq
by \cite[Equation 5.37]{KVW}. As in \cite[Theorem 5.38]{KVW} this $1/[d]\_t$ \emph{multiple cover formula for pairs} \eqref{MCFP} induces the following $1/[d]^2_t$ \emph{multiple cover formula for sheaves} \eqref{MCF}.

\begin{thm}\label{numer}
The summands of the decomposition by thickening type
\beq{PP}
P_v(n,t)\=\sum_{d|v}P_{(v/d)^d}(n,t)
\eeq
each individually satisfy \eqref{indep}. Thus $\VW_v(t)\=\sum_{d|v}\VW_{(v/d)^d}(t)$ similarly decomposes by thickening type with the summands defined by
\beq{VWv}
P_{(v/d)^d}(n,t)\=(-1)^{\chi(v(n))-1}\big[\chi(v(n))\big]_t\VW_{(v/d)^d}(t)
\eeq
and satisfying the multiple cover formula
\beq{MCF}
\VW_{(v/d)^d}(t)\=\frac1{[d]_t^2}\VW_{(v/d)^1}(t^d).
\eeq
\end{thm}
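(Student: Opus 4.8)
The plan is to derive all three assertions from facts already in hand---the thickening decomposition \eqref{PP} (produced by $\C^*$-localisation together with the vanishing of all non-uniformly-thickened strata), the pair multiple cover formula \eqref{MCFP}, and Liu's independence result \eqref{indep} for the \emph{total} invariant---by inducting on the divisibility of $v$. The one genuinely new ingredient is an identity between quantum integers. Since $\chi(v(n))$ is linear in the Mukai vector we have $\chi\(\tfrac1d v(n)\)=\tfrac1d\chi(v(n))$, so setting $m:=\chi\(\tfrac1d v(n)\)$ and $\chi(v(n))=dm$ a one-line computation gives
$$
\big[\chi(v(n))\big]_t\=[dm]_t\=[d]_t\,[m]_{t^d}\=[d]_t\,\big[\chi\(\tfrac1d v(n)\)\big]_{t^d}.
$$
This is exactly what will promote the $1/[d]_t$ of the pair formula \eqref{MCFP} into the $1/[d]_t^2$ of the desired sheaf formula \eqref{MCF}.

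I would induct on $\mathrm{div}(v)$, the largest $k$ for which $v/k$ is an integral Mukai vector. When $v$ is primitive the only term in \eqref{PP} is $d=1$, so $P_v=P_{(v)^1}$ and \eqref{indep} for the summand is just \eqref{indep} for $P_v$. For the inductive step, fix $d\mid v$ with $d>1$; then $\mathrm{div}(v/d)<\mathrm{div}(v)$, so by the inductive hypothesis the thickness-one invariant $P_{(v/d)^1}(n,t)$ already satisfies \eqref{indep} and defines $\VW_{(v/d)^1}(t)$. Substituting \eqref{MCFP} and the factorisation above into the normalised invariant I expect to find
$$
(-1)^{\chi(v(n))-1}\frac{P_{(v/d)^d}(n,t)}{\big[\chi(v(n))\big]_t}\=\frac1{[d]_t^2}(-1)^{\chi\(\frac1d v(n)\)-1}\frac{P_{(v/d)^1}(n,t^d)}{\big[\chi\(\frac1d v(n)\)\big]_{t^d}}\=\frac1{[d]_t^2}\VW_{(v/d)^1}(t^d),
$$
whose right-hand side is manifestly independent of $n$. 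In one stroke this shows every summand with $d>1$ obeys \eqref{indep} with the common normalisation $(-1)^{\chi(v(n))-1}/\big[\chi(v(n))\big]_t$ and, after naming the left-hand side $\VW_{(v/d)^d}(t)$ via \eqref{VWv}, records the multiple cover formula \eqref{MCF}.

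The remaining $d=1$ summand I would obtain by subtraction: since $P_v$ and every $P_{(v/d)^d}$ with $d>1$ now satisfy \eqref{indep} with one and the same normalisation, so does $P_{(v)^1}=P_v-\sum_{d>1}P_{(v/d)^d}$, and this defines $\VW_{(v)^1}(t)$ and completes \eqref{VWv}. Dividing \eqref{PP} through by $(-1)^{\chi(v(n))-1}\big[\chi(v(n))\big]_t$ and invoking \eqref{conj} for $P_v$ then yields $\VW_v=\sum_{d\mid v}\VW_{(v/d)^d}$, while \eqref{MCF} has already been read off above. I expect the principal difficulty to be not any single hard estimate but the bookkeeping that legitimises the induction: \eqref{indep} constrains only the \emph{total} $P_v$, so one must check that peeling off the thicker strata through \eqref{MCFP} leaves every piece---including the instanton piece $P_{(v)^1}$ extracted by subtraction---carrying the identical sign $(-1)^{\chi(v(n))-1}$ and quantum-integer prefactor. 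That everything closes up is precisely the force of the linearity $\chi(v(n))=d\,\chi\(\frac1d v(n)\)$, which drives both the sign and the factorisation $\big[\chi(v(n))\big]_t=[d]_t\big[\chi\(\frac1d v(n)\)\big]_{t^d}$ at once.
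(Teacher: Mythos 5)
Your proposal is correct and follows essentially the same route as the paper: induction on the divisibility of $v$ (the paper inducts on $r$ with $v=rv_0$, $v_0$ primitive), with Liu's result \eqref{indep} anchoring the base case, the pair multiple cover formula \eqref{MCFP} combined with the quantum-integer identity $[d\chi]_t=[d]_t[\chi]_{t^d}$ handling each $d\ge2$ summand, and the instanton piece $P_{(v)^1}$ extracted by subtraction. Even your worry about sign and prefactor bookkeeping matches the paper, which introduces the $\pm=(-1)^{\chi(w(n))-1}$ convention for exactly this purpose.
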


\begin{proof}
We prove the result for multiples $rv_0$ of a primitive class $v_0$ by induction on $r$. For $r=1$ the sum \eqref{PP} collapses to the $d=1$ term for which Liu's proof \cite{Liu} of \eqref{indep} gives the result. So to prove the result for $v=rv_0$ we may assume inductively that it has been proved for smaller multiples.

We adopt the notation that $\pm$ denotes the sign $(-1)^{\chi(w(n))-1}$ when it precedes a pairs invariant with total Mukai vector $w$. Using \eqref{MCFP} we rewrite \eqref{PP} as 
\begin{eqnarray} \nonumber
\pm P_v(n,t) &=& \pm P_{(v)^1}(n,t)+\sum_{d|v,\,d\ge2}\frac{\pm P_{(v/d)^1}\(n,t^d\)}{[d]\_t} \\ \nonumber
 &=& \pm P_{(v)^1}(n,t)+\sum_{d|v,\,d\ge2}\frac{\big[\chi\(\frac1dv(n)\)\big]\_{t^d}\,\VW_{(v/d)^1}(t^d)}{[d]\_t} \\
 &=& \pm P_{(v)^1}(n,t)+\sum_{d|v,\,d\ge2}[\chi(v(n))]\_t\,\frac{\VW_{(v/d)^1}(t^d)}{[d]_t^2}\,, \label{PPP}
\end{eqnarray}
where in the second line we have used the induction assumption on $P_{(v/d)}(n,t)$ and in the third line the identity $[\chi]_{t^d}=[d\chi]\_t/[d]\_t$. Since, by \cite{Liu}, $P_v(n,t)$ satisfies \eqref{indep} we see that every summand in \eqref{PPP} has the same property. Thus every summand in \eqref{PP} does too, and the induction continues. Finally, substituting \eqref{VWv} into \eqref{PPP} gives
$$
\VW_v(t)\=\sum_{d|v}\VW_{(v/d)^d}(t),
$$
where
\begin{equation*}
\VW_{(v/d)^d}(t)\=\frac1{[d]_t^2}\VW_{(v/d)^1}(t^d).\qedhere
\end{equation*}
\end{proof}

\section{The instanton contribution}\label{inst}
By \eqref{MCF} the Vafa-Witten invariants $\VW_v(t)$ are determined by the invariants $\VW_{(v)^1}(t)$ which count semistable sheaves supported \emph{scheme theoretically on $S$} (the so-called ``instanton contribution") via the formula \eqref{VWv} with $d=1$. So to prove Theorem \ref{VWt} what remains to show is
\beq{left}
\VW_{(v)^1}(t)\=t^{\;v^2/2-1}\,\chi\_{-t}\;(\Hilb^{1-v^2/2}S).
\eeq
This is standard if $v$ is primitive (so that semistability and stability coincide for a generic polarisation and the moduli space of stable sheaves supported scheme theoretically on $S$ is deformation equivalent to a Hilbert scheme of points on $S$) but it is remarkable otherwise. It is equivalent, notice, to the invariants depending only on the square of $v$, not its divisibility.

\subsection{Wall crossing}\label{wc}
Before we use wall crossing formulae we make some vague remarks about their general shape, beginning with Joyce-Song's wall crossing formula \cite{JS} for numerical generalised DT invariants on projective Calabi-Yau 3-folds. For simplicity we ignore the Behrend function and work with the ``naive" DT invariants $(-1)^{\dim M}e(M)$ of moduli spaces $M$ of stable objects (generalised to the semistable case by taking the signed Euler characteristic of Joyce's Hall algebra logarithm). The resulting formulae also hold without making these simplifying assumptions.

It is also important to us that the theory can be applied to moduli spaces of (Joyce-Song or stable) pairs, where the DT invariants should be thought of as counting stable complexes $I\udot\in D(\mathrm{coh}\,X)$.

Denote by $\J^{\pm}(v)$ the generalised DT invariants counting semistable objects in class $v=v_1+v_2$ just above and below a wall in a space of stability conditions over which the slopes or phases of $v_1,\,v_2$ cross.

Below the wall we have stable objects $E$ sitting in nontrivial extensions \beq{12}
0\To E_2\To E\To E_1\To0,
\eeq
with $E_i$ stable of class $v_i$. Above the wall these are unstable, but nontrivial extensions in the opposite direction 
\beq{21}
0\To E_1\To E'\To E_2\To0
\eeq
become stable. While we cannot control the Euler characteristics $ext^1(E_i,E_j)$ of the spaces $\PP(\Ext^1(E_i,E_j))$ of extensions (\ref{12},\,\ref{21}), we do know their \emph{difference}. Using stability ($\Hom(E_1,E_2)=0=\Hom(E_2,E_1)$), Serre duality ($\Ext^i(E_1,E_2)=\Ext^{3-i}(E_2,E_1)^*$) and Riemann-Roch,
$$
ext^1(E_2,E_1)-ext^1(E_1,E_2)\=\chi(v_1,v_2),
$$
where $\chi(\ ,\ )$ is the Mukai pairing. Integrating this difference (in the sense of taking weighted Euler characteristic, and further inserting the $(-1)^{\dim}$ sign) over the space of $E_1$s and $E_2\;$s then gives the leading term in the wall crossing formula
\beq{eWCF}
\J^+(v)\=\J^-(v)+\sum_{v_1+v_2=v}(-1)^{\chi(v_1,v_2)-1}\chi(v_1,v_2)\,\J^-(v_1)\,\J^-(v_2)+\dots
\eeq
The higher order terms $\dots$ are a finite sum of terms
$$
C(v_1,\dots,v_k)\,\J^-(v_1)\cdots\J^-(v_k),\ \text{ where }\ v_i+\dots +v_k\,=\,v,
$$
and correspond to semi-destabilising filtrations with $k$ graded pieces in classes $v_i$. So the $k=2$ case is what we described above, in the special case that the $E_1,\,E_2$ are stable on the wall (if they are only semistable their destabilising filtrations will also give rise to longer filtrations and so higher order terms).\medskip

There are now variants \cite{J,Liu,KLT} of the wall crossing formula \eqref{eWCF} using virtual cycles\,---\,or virtual structure sheaves\,---\,in place of (Behrend-weighted) Euler characteristics. (In fact the paper \cite{J} does not work in the Calabi-Yau threefold setting, but  can be made to work there with heavy modification \cite{KLT}.) For DT invariants on a compact Calabi-Yau 3-fold they give the same result, but for \emph{reduced} DT theory on a compact Calabi-Yau 3-fold $X$ with $H^2(\cO_X)\ne0$ they simplify enormously.

The point is that the deformation-obstruction theory of the associated graded of a length $k$ destabilising filtration ($E_1\oplus E_2$ in our $k=2$ example above) has $k$ copies of $H^2(\cO_X)\ne0$ in its obstruction space. Only 1 of these is removed by taking the reduced obstruction theory, so for $k>1$ we end up with a nonzero trivial piece in the obstruction theory, which makes the contribution of the associated graded to the virtual cycle vanish. Ultimately this causes the $k>1$ terms in \eqref{eWCF} vanish, leaving the following trivial wall crossing formula for \emph{reduced} DT invariants
\beq{=WCF}
\J^+_{\red}(v)\=\J^-_{\red}(v).
\eeq
Our situation is a little different because the cosection \eqref{vec} \emph{vanishes} in one (and only one\footnote{In fact it vanishes precisely for (complexes of) sheaves pulled back from $S$ to $X=S\times\C$, but these are not compactly supported. The only one which features in our analysis is the line bundle $\cO_X(-n)$ used to make Joyce-Song pairs (or $\cO_X$ for stable pairs).}) situation: when the complex $I\udot$ is just the line bundle\footnote{This is for Joyce-Song pairs. For stable pairs we take $n=0$.} $\cO_X(-n)$ (i.e. when $I\udot$ is the complex associated to the zero pair $(E,s)=(0,0)$). So there is no reduced obstruction theory for this line bundle. 

Now Vafa-Witten theory uses a reduced obstruction theory for compactly supported sheaves (with the $H^2(\cO_S)=\C$ piece \eqref{2} removed from the $R\Hom(E,E)$ obstruction theory) and for non-trivial Joyce-Song and stable pairs $I\udot$ (reduced by removing the $H^2(\cO_S)=\C$ piece \eqref{vec} from the $R\Hom(I\udot,I\udot)\_0$ obstruction theory). Thus we get the trivial wall crossing formula \eqref{=WCF} \emph{except in one situation}: when $k=2$ and the line bundle $\cO_X(-n)$ is one of the two factors in the associated graded\footnote{In fact $E_2$ will be the shift $F[-1]$ of a compactly supported sheaf because the destabilising filtration will come from the exact triangle $F[-1]\to I\udot\to\cO_X(-n)$ given by rotating the natural triangle $I\udot\to\cO_X(-n)\to F$.}
\beq{gr}
\cO_X(-n)\oplus E_2.
\eeq
The reduced (or Vafa-Witten) obstruction theory of the sum \eqref{gr} is made up of the \emph{unreduced} trace-free obstruction theory
\beq{zro}
R\Hom\(\cO_X(-n),\cO_X(-n)\)_0\=0
\eeq
of the first summand, the \emph{reduced} obstruction theory of the second, and the cross terms $R\Hom(\cO_X(-n),F)=H^*(F(n))$ and $R\Hom(F,\cO_X(-n))=H^*(F(n))^\vee[-3]$.
By \eqref{zro} the first summand is rigid and contributes $+1$ to $\J^-(v_1)$ in the degree 2 term of \eqref{eWCF}, which becomes\footnote{Doing wall crossing in more general classes, some of which admit reduced obstruction theories and some of which have no $H^2(\cO_X)$ terms in their obstruction theory, we should expect a wall crossing formula of the shape 
$$\J^+_{\red}(v)\ =\ \J^-_{\red}(v)+\sum_{v_1+\dots+v_n=v}C(v_1,\dots,v_n)\,\J^-_{\red}(v_1)\;\J^-(v_2)\dots\J^-(v_n)$$ with universal constants $C(v_1,\dots,v_n)$ and \emph{precisely one reduced invariant} in each product.}
$$
\J^+_{\red}(v)\=\J^-_{\red}(v)+(-1)^{\chi(v_2(n))-1}\;\chi\(\cO_X(-n),v_2\)\,\J^-_{\red}(v_2)
$$
when $v=[\cO_X(-n)]+v_2$ and the slopes of $\cO_X(-n)$ and $v_2$ cross over the wall. Working $\C^*$-equivariantly with $K$-theoretic invariants refines this to 
\beq{KWCF}
\J^+_{\red}(v,t)\=\J^-_{\red}(v,t)+(-1)^{\chi(v_2(n))-1}\big[\chi(v_2(n))\big]_t\,\J_{\red}^-(v_2,t),
\eeq
replacing the Euler characteristic $\chi(v_2(n))$ of $\PP\(H^0(F_2(n))\)$ with its $\chi\_{-t}$ genus $[\chi(v_2(n))]\_t\;$.
While most of this discussion has been vague and purely motivational, versions of \eqref{KWCF} are precisely what have been proved in \cite{Liu, KLT}. In particular the reader should recognise \eqref{conj} as an instance of \eqref{KWCF}.

\subsection{Deformations, autoequivalences and wall crossing}
We now follow \cite[Section 4.10]{To1} and \cite[proof of Equation (121)]{To1}, applying deformations of $S$ and autoequivalences of $D(\operatorname{coh}S)$ to turn $v$ into the Mukai vector $(0,\beta,\chi)$ of a \emph{1-dimensional sheaf} with the same square $v^2=\beta^2$ and divisibility.

A complication is that an autoequivalence $\Phi$ will not in general preserve the line bundle $\cO_X(-n)$, so will not preserve the Joyce-Song pairs functor
$$
E\Mapsto F_n(E)\ :=\ \Hom(\cO_X(-n),E).
$$
Instead we have the functor
$$
E\Mapsto G_n(E)\ :=\ \Hom\(\Phi(\cO_X(-n)),E\)
$$
such that $G_n(\Phi(E))=F_n(E)$. While Liu's wall crossing formalism is phrased in terms of the $F_n$, it could equally use $G_n$ (as indeed Joyce allows in his wall crossing formula for cohomological virtual cycles \cite[Section 5.1]{J}) without changing the Vafa-Witten invariants or any of the results; cf. \cite[Remark 5.10(d)]{J}.

Such operations also change the stability condition for sheaves, but we can then wall cross back\footnote{After possibly applying derived duals to get back into the same component of the space of stability conditions, as in the proof of Corollary 4.30 in \cite{To1}.} to Gieseker stability using the \cite{KLT} wall crossing formula. Since $H^2(\cO_S)\ne0$ this is very simple, showing as in \eqref{=WCF} that (equivariant $K$-theoretic) invariants are the same on both sides of the wall.

The upshot is that $\VW_v(t)=\VW_{\Phi_*(v)}$ and to prove \eqref{left} we may, and do, assume from now on that $v=(0,\beta,\chi)$. And since \eqref{left} is true for primitive classes, we only need prove that the $\VW_{(0,\beta,\chi)^1}(t)$ are independent of the divisibility of the Mukai vector $(0,\beta,\chi)$. In fact a weaker statement will be enough for us.

\begin{prop} \label{prop}
To prove Theorem \ref{VWt} it is enough to show $\VW_{(0,\beta,\chi)^1}(t)$ depends only on $\beta^2$, not on $\chi$ nor the divisibility of $\beta.\hfill\square$
\end{prop}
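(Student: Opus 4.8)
The plan is to deduce Theorem \ref{VWt} from the stated independence property by comparison with the already-known primitive case. By the reductions carried out in Section \ref{inst} --- the multiple cover formula \eqref{MCF} of Theorem \ref{numer}, together with the autoequivalence-and-wall-crossing identity $\VW_v(t)=\VW_{\Phi_*(v)}(t)$ --- proving Theorem \ref{VWt} is equivalent to establishing \eqref{left} for a Mukai vector of the special form $v=(0,\beta,\chi)$, for which $v^2=\beta^2$. Everything therefore reduces to pinning down the single function $\VW_{(0,\beta,\chi)^1}(t)$.

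First I would record the primitive input. If $\beta_1\in H^2(S,\Z)$ is a primitive class, then $(0,\beta_1,\chi_1)$ is a primitive Mukai vector for \emph{every} $\chi_1$, its first component being zero and its middle component primitive. Hence the primitive case of \eqref{left} noted in Section \ref{inst} applies and gives
$$
\VW_{(0,\beta_1,\chi_1)^1}(t)\= t^{\,\beta_1^2/2-1}\,\chi_{-t}\big(\Hilb^{1-\beta_1^2/2}S\big),
$$
a quantity manifestly depending only on $\beta_1^2$ (and, reassuringly, not on the auxiliary $\chi_1$).

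Next I would manufacture, for an arbitrary class $(0,\beta,\chi)$, a primitive vector of equal square. As $H^2(S,\Z)$ is an even lattice we may write $\beta^2=2m$ with $m\in\Z$; the hyperbolic summand $U\subset H^2(S,\Z)$, with isotropic generators $e,f$ satisfying $e\cdot f=1$, then contains the primitive vector $\beta_1:=e+mf$ with $\beta_1^2=2m=\beta^2$. Choosing any $\chi_1$, the independence hypothesis --- that $\VW_{(0,\beta,\chi)^1}(t)$ depends only on $\beta^2$, hence neither on $\chi$ nor on the divisibility of $\beta$ --- yields $\VW_{(0,\beta,\chi)^1}(t)=\VW_{(0,\beta_1,\chi_1)^1}(t)$, since the two classes share the common value $\beta^2=\beta_1^2$. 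Substituting the displayed primitive computation gives exactly \eqref{left} for $v=(0,\beta,\chi)$, completing the deduction of Theorem \ref{VWt}.

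The reduction is essentially formal, the independence hypothesis being assumed at this stage and constituting the real work of the later sections. Within the present argument the single step that needs genuine care --- and which I would regard as the main obstacle here --- is the realizability of the comparison vector $\beta_1$: one must know that the ``standard'' primitive statement applies to it, which is arranged by deforming $S$ within its period domain until $\beta_1$ is an irreducible curve class and appealing to the deformation invariance of the reduced $K$-theoretic Vafa-Witten invariants.
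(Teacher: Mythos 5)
Your argument is correct and runs along the same underlying lines as the paper's (essentially unwritten) justification: anchor the value of $\VW_{(0,\beta,\chi)^1}(t)$ by the known primitive case of \eqref{left} and transfer it using the assumed independence. Where you differ is in the choice of comparison class, and your choice is needlessly expensive. You replace $\beta$ by a primitive class $\beta_1=e+mf$ of the same square, which forces you to confront realizability---$\beta_1$ need not be algebraic on the given $S$ (e.g.\ on a Picard rank one $K3$ there is no primitive algebraic class of square $8$)---and so you must deform $S$ and invoke deformation invariance of the refined invariants, which you yourself flag as the main obstacle. But the proposition's phrasing ``not on $\chi$'' points to a cheaper route that stays on the same surface: the primitive case of \eqref{left} holds whenever the \emph{Mukai vector} $v=(0,\beta,\chi)$ is primitive, which requires only $\gcd(\operatorname{div}\beta,\chi)=1$, not primitivity of $\beta$ itself. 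So for arbitrary $(0,\beta,\chi)$ one simply picks $\chi'$ coprime to $\operatorname{div}\beta$ (say $\chi'=1$); then $(0,\beta,\chi')$ is a primitive Mukai vector with the same $\beta$, the hypothesis gives $\VW_{(0,\beta,\chi)^1}(t)=\VW_{(0,\beta,\chi')^1}(t)$, and the primitive case evaluates the latter as $t^{\,\beta^2/2-1}\chi_{-t}(\Hilb^{1-\beta^2/2}S)$, proving \eqref{left} with no lattice argument and no deformation. Your version is not wrong---deformation invariance is implicitly part of the paper's toolkit, since the reduction of $v$ to $(0,\beta,\chi)$ already follows Toda's deformation-and-autoequivalence argument, and the independence hypothesis as eventually established (via the universal integrals of \cite{KT2}) is uniform across all $K3$s, so your cross-surface comparison would in fact go through---but the $\chi$-comparison uses strictly less and is surely what the $\square$ in the statement is silently appealing to.
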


To prove the $\VW_{(0,\beta,\chi)^1}(t)$ do indeed depend only on $\beta^2$ we will calculate them. To do so we relate these sheaf counts to counts of different types of pairs made from them\,---\,\emph{stable pairs} \cite{PT1} rather than Joyce-Song pairs. That is we consider pairs $(F,s)$ on $X$ with $s\in H^0(F)$ satisfying the stability condition
\begin{itemize}
\item $F$ is a \emph{pure} $1$-dimensional sheaf, and 
\item $\dim(\coker s)=0$.
\end{itemize}
There is a fine quasi-projective moduli space $P_\chi(X,\beta)$ of stable pairs with $c_1(p_*\;F)=\beta$ and $\chi(F)=\chi$. It has a projective $\C^*$-fixed locus. Associated to such a stable pair is the 2-term complex $I\udot=\{\cO_X\rt s F\}$. Then the perfect obstruction theory and reduced $K$-theoretic stable pair invariants $P_{\beta,\chi}(t)$ of $X$ are defined in the same way as the Joyce-Song pair invariants in Section \ref{ss}.

We use Toda's wall crossing analysis \cite[Section 5.7]{TodaBPS} in moving from a weak stability condition in which the complexes $I\udot\in P_\chi(X,\beta)$) are stable to one in which their derived duals $(I\udot)^\vee\in P_{-\chi}(X,\beta)$ are stable \cite[Proposition 5.4]{TodaBPS}. This involves crossing the wall $W_\chi$ over which the tautological exact triangle
$$
F[-1]\To I\udot\To\cO_X\rt sF
$$
destabilises $I\udot\in P_\chi(X,\beta)$, then crossing $W_{-\chi}$ where the derived dual
$$
\cO_X\To(I\udot)^\vee\To\ext^2(F,\cO_X)[-1]
$$
in $P_{-\chi}(X,\beta)$ becomes stable. The associated gradeds for these filtrations are of the form \eqref{gr}. The walls in between have associated gradeds which are not of this form so all their pieces have nonzero cosection \eqref{2} or \eqref{vec}. Thus by (\ref{=WCF}, \ref{KWCF}) the upshot is that the difference between the invariants $P_{\beta,\chi}(t)$ counting stable objects above $W_\chi$ and those $P_{\beta,-\chi}(t)$ counting stable objects below $W_{-\chi}$ is
\beq{Kth}
P_{\beta,\chi}(t)-P_{\beta,-\chi}(t)\=(-1)^{\chi-1}[\chi]\_t\;\VW_{(0,\beta,\chi)}(t).
\eeq
Because $H^2(\cO_S)\ne0$ this is much simpler than Toda's wall crossing formula \cite[Theorem 5.7]{TodaBPS} deduced from Joyce-Song.

Analogously to \eqref{MCFP} we also have a multiple cover formula for stable pairs. In fact the first step\,---\,the equality \eqref{uniform} of moduli spaces of Joyce-Song pairs of type $(v)^1$ and of uniform type $(v/d)^d$ was actually proved first in the stable pair setting in \cite[Lemma 1]{KKV}, as was the comparison of the two resulting perfect obstruction theories \cite[Proposition 4]{KKV}. Using these the comparison of the refined invariants of \cite[Section 5.4]{KVW} proceeds in exactly the same way to give the following analogue of \eqref{MCFP},
\beq{MCFp}
(-1)^{\chi-1}P_{(\beta/d,\,\chi/d)^d}(t)\=\frac{(-1)^{\chi/d-1}}{[d]\_t}P_{(\beta/d,\,\chi/d)^1}\(t^d\),
\eeq
using the same partition notation as before.
So we can rerun the proof of Theorem \ref{numer} for $P_{\beta,\chi}(t)-P_{\beta,-\chi}(t)$ and (\ref{Kth},\ \ref{MCFp}) in place of $P_v(n,t)$ and (\ref{conj},\ \ref{MCFP}) to show the wall crossing formula \eqref{Kth} also holds for the $(\ )^1$-contributions to $P_{\beta,\chi}(t)$ supported scheme theoretically on $S$.

\begin{prop}
\beq{vw1}
P_{(\chi,\beta)^1}(t)-P_{(-\chi,\beta)^1}(t)\=(-1)^{\chi-1}[\chi]\_t\;\VW_{(0,\beta,\chi)^1}(t).
\eeq
\end{prop}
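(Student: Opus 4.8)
The plan is to transcribe the inductive argument of Theorem~\ref{numer} almost verbatim, making the three substitutions flagged above: replace the Joyce--Song pair invariant $P_v(n,t)$ by the difference $P_{\beta,\chi}(t)-P_{\beta,-\chi}(t)$, replace the defining relation \eqref{conj} by the stable-pair wall-crossing identity \eqref{Kth}, and replace the Joyce--Song multiple cover formula \eqref{MCFP} by its stable-pair analogue \eqref{MCFp}. The induction runs on the divisibility $r$ of $(\beta,\chi)$, writing $(\beta,\chi)=r(\beta_0,\chi_0)$ with $(\beta_0,\chi_0)$ primitive, and the goal at each step is to peel off the $(\ )^1$-contribution from \eqref{Kth}.

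First I would record the thickening decomposition for stable pairs, the analogue of \eqref{PP},
$$
P_{\beta,\chi}(t)\=\sum_{d|(\beta,\chi)}P_{(\beta/d,\chi/d)^d}(t),
$$
together with the identical decomposition for $-\chi$ in place of $\chi$. Both rest on the isomorphism \eqref{uniform} of $\C^*$-fixed loci, established first in the stable-pair setting in \cite{KKV}, and on the comparison of the two $\C^*$-fixed obstruction theories there. Since $(\beta,-\chi)$ has the same divisibility as $(\beta,\chi)$, the two decompositions are indexed by the same set of thicknesses $d$, so subtracting them writes $P_{\beta,\chi}(t)-P_{\beta,-\chi}(t)$ as a sum over $d|(\beta,\chi)$ of the thickening-$d$ differences $P_{(\beta/d,\chi/d)^d}(t)-P_{(\beta/d,-\chi/d)^d}(t)$.

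For the inductive step I would feed \eqref{MCFp} into each $d\ge2$ summand to re-express it through the $(\ )^1$-differences evaluated at $t^d$, then invoke the inductive hypothesis \eqref{vw1} at the smaller class $(\beta/d,\chi/d)$ to replace that $(\ )^1$-difference by $(-1)^{\chi/d-1}[\chi/d]_{t^d}\,\VW_{(0,\beta/d,\chi/d)^1}(t^d)$. The quantum-integer identity $[\chi/d]_{t^d}=[\chi]_t/[d]_t$ used in \eqref{PPP}, together with the cancellation of the repeated sign $(-1)^{\chi/d-1}$, then collapses each $d\ge2$ summand to $(-1)^{\chi-1}[\chi]_t\,[d]_t^{-2}\,\VW_{(0,\beta/d,\chi/d)^1}(t^d)$. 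By the Vafa--Witten multiple cover formula \eqref{MCF} of Theorem~\ref{numer} this is exactly $(-1)^{\chi-1}[\chi]_t$ times the $d$th term of the decomposition $\VW_{(0,\beta,\chi)}(t)=\sum_{d}\VW_{(0,\beta/d,\chi/d)^d}(t)$. Since \eqref{Kth} identifies the total difference $P_{\beta,\chi}(t)-P_{\beta,-\chi}(t)$ with $(-1)^{\chi-1}[\chi]_t\,\VW_{(0,\beta,\chi)}(t)$, subtracting the matched $d\ge2$ terms from both sides leaves the $d=1$ identity \eqref{vw1}; the base case $r=1$ is \eqref{Kth} itself, the decomposition then having only its $d=1$ term.

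The step I expect to require the most care is the coherent, term-by-term matching of the two decompositions rather than any single manipulation. Everything hinges on the $d\ge2$ contributions produced from the stable-pair side (via \eqref{MCFp} and induction) agreeing, summand by summand, with $(-1)^{\chi-1}[\chi]_t$ times the Vafa--Witten $d\ge2$ terms of \eqref{MCF}; only then can the $d=1$ term be isolated. Making this airtight means keeping the two sign conventions $(-1)^{\chi-1}$ and $(-1)^{\chi/d-1}$ and the two flavours of quantum integer, $[d]_t$ and $[\chi/d]_{t^d}$, aligned exactly as in \eqref{PPP}, and confirming that the set of admissible thicknesses is the same for $\chi$ and $-\chi$. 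The one genuinely non-formal input, which must be cited rather than transcribed, is that both the thickening decomposition and the multiple cover formula \eqref{MCFp} hold for stable pairs at all; these follow from the $\C^*$-fixed locus analysis of \cite{KKV} and the obstruction-theory comparison of \cite[Section 5.4]{KVW}, exactly as in the Joyce--Song case. Once these are in hand the proof is a routine re-run of Theorem~\ref{numer}.
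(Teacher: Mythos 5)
Your proposal is correct and is essentially the paper's own argument: the paper likewise expands both sides of \eqref{Kth} into thickening-type summands, substitutes \eqref{MCFp} on the pairs side and \eqref{MCF} on the sheaf side, and isolates the $d=1$ term inductively (phrased there as a descending induction on $d$ with the primitive class $(\beta/d,\chi/d)$ as base case, which is the mirror image of your ascending induction on the divisibility $r$), exactly as foreshadowed by the paper's remark that one should ``rerun the proof of Theorem \ref{numer}''. Your sign bookkeeping\,---\,the cancellation of the repeated $(-1)^{\chi/d-1}$ and the identity $[\chi/d]_{t^d}=[\chi]\_t/[d]\_t$\,---\,is precisely what the paper's computation uses.
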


\begin{proof}
Expanding both sides of \eqref{Kth} as a sum over uniform partitions and substituting in the stable pairs multiple cover formulae \eqref{MCFp} on the left and the sheaf multiple cover formula \eqref{MCF} on the right gives
$$
\scalebox{.92}{$\displaystyle{\sum_{d|(\beta,\chi)}\!\!\frac{P_{(\beta/d,\,\chi/d)^1}\(t^d\)-P_{(\beta/d,-\chi/d)^1}\(t^d\)}{[d]\_t} \\
\=(-1)^{\frac\chi d-1}\!\!\sum_{d|(\beta,\chi)}\!\left[\frac\chi d\right]_{t^d}\!\frac{\VW_{(0,\beta/d,\,\chi/d)^1}\(t^d\)}{[d]\_t}}$}
$$
by the identity $[\chi/d]_{t^d}=[\chi]\_t/[d]\_t$. Then \eqref{vw1} follows from equating the individual terms in the sum by a descending induction on $d$: the base case of primitive $(\beta/d,\,\chi/d)$ follows already from \eqref{Kth}.
\end{proof}

\section{Computation on $S$}\label{S}
So, combining \eqref{MCF} and \eqref{vw1}, everything is now determined by the invariants $P_{(\chi,\beta)^1}(t)$ counting stable pairs \emph{supported on $S$}. By \cite[Proposition 2.31]{KVW} these are
\beq{hirz}
P_{(\chi,\beta)^1}(t)\=(-1)^{\chi-1}\;t^{\;-(\beta^2+\chi+1)/2}\;\chi^{\vir}_{-t}\(P_\chi(S,\beta)\)
\eeq
and can be computed using the results of \cite{KT1,KT2,KY}. Fix the notation
\begin{itemize}
\item $|\beta|:=\PP(H^0(L))$ is the complete linear system of the unique line bundle $L$ with $c_1(L)=\beta$,
\item $s\in H^0\(S\times|\beta|,\,L\boxtimes\cO(1)\)$ cuts out the universal curve,
\item $Z\subset S\times\Hilb^n(S)$ is the universal subscheme, and
\item $\xymatrix@=10pt{& S\times\Hilb^n(S)\times|\beta| \ar[dl]^(.45){\rho_n}\ar[dr]+<-15pt,8pt>_-\rho \\
\Hilb^n(S)\times|\beta|\hspace{-2mm} && \hspace{-3mm}S\times|\beta| & \hspace{-2mm}\text{are the projections.}}$ 
%\item $\rho_n,\,\rho$ are the projections from $S\times\Hilb^n(S)\times|\beta|$ to $\Hilb^n(S)\times|\beta|$ and $S\times|\beta|$ respectively.
\end{itemize}
By \cite[Proposition B.8]{PT3} $P_\chi(S,\beta)$ is the relative Hilbert scheme of $\chi+\beta^2/2$ points on the universal curve over $|\beta|$. In \cite{KT1} it is shown that\footnote{Since $H^1(\cO_S)=0$ the Hilbert scheme of divisors of a fixed topological type is just a linear system, making the analysis in \cite{KT1} much simpler.}
$$
P_\chi(S,\beta)\ \subset\ \Hilb^n(S)\times|\beta|, \quad n\,=\,\chi+\beta^2/2,
$$
is cut out by the section $s^{[n]}:=\rho_{n*}\(\rho^*s|_{Z\times|\beta|}\)$ of the vector bundle
$$
E\ :=\ \(L\boxtimes\cO(1)\)^{[n]}\ :=\ \rho_{n*}\!\left(\rho^*(L\boxtimes\cO(1))\big|_{Z\times|\beta|}\right)
$$
and that this description induces the correct perfect obstruction theory on $P_\chi(S,\beta)$. Thus the pushforward of $[P_\chi(S,\beta)]^{\vir}$ to $\Hilb^nS\times|\beta|$ is the Euler class of $E$, so we can compute integrals against cohomology classes $\sigma$ restricted from $\Hilb^n(S)\times|\beta|$ by
\beq{eE}
\int_{[P_\chi(S,\beta)]^{\vir}}\sigma\=\int_{\Hilb^n(S)\times|\beta|}\sigma\cup e(E).
\eeq
In our case \eqref{hirz} we let $\sigma$ be the Hirzebruch genus of the $K$-theory class
$$
T_{\Hilb^n(S)\times|\beta|}-E,
$$
whose restriction to $P_\chi(S,\beta)$ is $T^{\vir}_{P_\chi(S,\beta)}$.
The right hand side of \eqref{eE} is computed in \cite{KT2} by 
pushing down to an integral of tautological classes on $\Hilb^n(S)$ which are then converted to integrals over $S^n$ by the method of \cite{EGL}. Expressing the result in terms of integrals over $S$ writes the invariants \eqref{hirz} as formulae of $\chi$ and $\beta^2$ but \emph{not on the divisibility of }$\beta$, as in \cite[Theorem 1.1]{KT2}.

Thus, to compute \eqref{hirz}, we may assume $\beta$ is primitive. Then $P_\chi(S,\beta)$ is smooth \cite[Proposition C.2]{PT3} and its $\chi\_{-t}$ genus was computed by Kawai-Yoshioka in \cite{KY}. Set $\beta^2=2h-2$ and let
\beq{tnorm}
P^h_\chi(t)\ :=\ (-1)^{\chi-1\;}t^{\;(1-2h-\chi)/2}\chi\_{-t}(P_\chi(S,\beta))\=P_{(\chi,\beta)^1}(t)
\eeq
be the invariants \eqref{hirz}. Then substituting  $\tilde t=1,\,q=u$ and $y=-qt^{-1/2}$ into \cite[Equation 5.159]{KY} gives
\beq{KY}
\(q^{-1}+[2]\_t+q\)\ \sum_{h=0}^\infty\ \sum_{\chi=1-h}^\infty P^h_\chi(t)\,u^hq^\chi\=\hspace{4cm}\vspace{-1mm}
\eeq
$$
\scalebox{0.82}{$\displaystyle{\prod_{n\geq1}\frac{1}{\(1+t^{-\frac12}qu^n\)\(1+t^{\frac12}q^{-1}u^n\)\(1+t^{\frac12}qu^n\)\(1+t^{-\frac12}q^{-1}u^n\)(1-u^n)^{18}(1-tu^n)(1-t^{-1}u^n)}\,.}$}\vspace{1mm}
$$
%$$
%\scalebox{0.82}{$\displaystyle{\prod_{n\geq1}\frac{1}{\(1+\tau^{-1}qu^n\)\(1+\tau q^{-1}u^n\)\(1+\tau qu^n\)\(1+\tau^{-1}q^{-1}u^n\)(1-u^n)^{18}(1-\tau^2u^n)(1-\tau^{-2}u^n)}\,.}$}
%$$

\begin{lem} The infinite product \eqref{KY} can be written as a sum
\beq{bps}
\sum_{h=0}^\infty\ \sum_{g=0}^h\,n^h_g(t)\Big(q^{-1}+[2]\_t+q\Big)^gu^h,
\eeq
where $n^h_g(t)\in\Q\big[t^{\pm\frac12}\big]$ is a palindromic\;\footnote{\;I.e. invariant under $\tau\ot\hspace{-2mm}\to\tau^{-1}$. The $n_g^h(t)$ are the Nekrasov-Okounkov refinement of the reduced Gopakumar-Vafa invariants of $X$.} Laurent polynomial in $\tau:=t^{1/2}$.
\end{lem}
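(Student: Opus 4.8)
The plan is to work directly with the infinite product on the right-hand side of \eqref{KY}, which I denote $\Pi(q,\tau,u)$; here $\tau:=t^{1/2}$, so that $[2]_t=\tau+\tau^{-1}$ and $P:=q^{-1}+[2]_t+q=q+q^{-1}+\tau+\tau^{-1}$. Each factor of $\Pi$ has the form $1+O(u)$, so the product is a well-defined power series in $u$, and its coefficient of $u^h$, call it $c_h(q,\tau)$, is a genuine Laurent polynomial in $q$ and $\tau$: only the factors with index $n\le h$ can contribute a nontrivial term to $[u^h]\Pi$, and each contributes only finitely many. The goal is to show that $c_h$ is a polynomial of degree $\le h$ in $P$ whose coefficients $n_g^h(\tau)\in\Q[\tau^{\pm1}]$ are palindromic, i.e. invariant under $\tau\mapsto\tau^{-1}$.

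First I would record two symmetries of $\Pi$, both visible by inspection. The four $q$-dependent factors carry the arguments $t^{-1/2}q,\ t^{1/2}q^{-1},\ t^{1/2}q,\ t^{-1/2}q^{-1}$; the substitution $q\mapsto q^{-1}$ permutes these four arguments, and so does $\tau\mapsto\tau^{-1}$, while the remaining factors $(1-u^n)^{18}(1-tu^n)(1-t^{-1}u^n)$ are manifestly fixed by both. Hence $\Pi$, and therefore each $c_h$, is invariant under $q\mapsto q^{-1}$ and separately under $\tau\mapsto\tau^{-1}$.

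Next the degree bound. In the geometric expansion $(1+t^{\pm1/2}q^{\pm1}u^n)^{-1}=\sum_{k\ge0}(-t^{\pm1/2}q^{\pm1})^k u^{nk}$, every power $q^{\pm k}$ is accompanied by $u^{nk}$ with $n\ge1$, so its $q$-exponent is bounded in absolute value by its $u$-exponent; summing over the chosen factors, every monomial of $c_h$ has $|\deg_q|\le h$. Thus $c_h$ is a symmetric (in $q\mapsto q^{-1}$) Laurent polynomial of $q$-degree $\le h$, so $c_h=\sum_{j=0}^h a_j(\tau)(q+q^{-1})^j$ for unique $a_j\in\Q[\tau^{\pm1}]$, and substituting $q+q^{-1}=P-(\tau+\tau^{-1})$ rewrites this as $c_h=\sum_{g=0}^h n_g^h(\tau)\,P^g$ with $n_g^h\in\Q[\tau^{\pm1}]$, which is exactly the shape \eqref{bps}.

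Finally, palindromicity. The passage from $q+q^{-1}$ to $P$ is an invertible affine change of variable over $\Q[\tau^{\pm1}]$, so the $P$-expansion is unique: the $q$-symmetric Laurent polynomials form the free polynomial algebra $\Q[\tau^{\pm1}][q+q^{-1}]=\Q[\tau^{\pm1}][P]$. Applying $\tau\mapsto\tau^{-1}$ fixes both $c_h$ (by the second symmetry) and $P$, giving $\sum_g n_g^h(\tau^{-1})P^g=\sum_g n_g^h(\tau)P^g$, whence uniqueness forces $n_g^h(\tau^{-1})=n_g^h(\tau)$. The only genuine content is the $q$-degree bound of the third step: it is what guarantees a \emph{polynomial} in $P$ (rather than a Laurent series) of degree $\le h$, so I expect that estimate, together with checking that the two displayed symmetries descend to each coefficient via uniqueness of the $P$-expansion, to be the main point; everything else is formal.
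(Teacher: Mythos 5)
Your proof is correct and takes essentially the same route as the paper's: both rest on the observation that the coefficient of $u^h$ in the product is a Laurent polynomial in $q$, invariant under $q\mapsto q^{-1}$ and separately under $\tau\mapsto\tau^{-1}$, of $q$-degree at most $h$, combined with the fact that the powers $(q^{-1}+[2]_t+q)^g$ for $0\le g\le h$ form a $\Q[\tau^{\pm1}]$-basis for such palindromic polynomials. The only difference is presentational: you justify the basis fact via the identification $\Q[\tau^{\pm1}][q+q^{-1}]=\Q[\tau^{\pm1}][q^{-1}+[2]_t+q]$ and deduce palindromicity of the $n^h_g$ from uniqueness of the expansion, whereas the paper extracts the $n^h_g(t)$ by explicit top-down subtraction of leading coefficients; your write-up is in fact slightly more explicit about why the $\tau\mapsto\tau^{-1}$-invariance descends to each coefficient.
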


\begin{proof}
Expanding the product in powers of $u$ we see the coefficient of $u^h$ is a palindromic Laurent polynomial in $q$ of degree $\le h$ (with coefficients which are palindromic Laurent polynomials in $\tau$ of degree $\le h$). The result then follows from the observation that the $\(q^{-1}+[2]\_t+q\)^g$ form a $\C[\tau^{\pm1}]$-basis for the space of these polynomials.

More explicitly, set $n^h_h(t)$ to be the coefficient of $q^h$; this is a palindromic Laurent polynomial in $\tau$. Now subtract
$$
n^h_h(t)\(q^{-1}+[2]\_t+q\)^h
$$
from the product to get a new palindromic Laurent polynomial in $q$ of degree $\le h-1$. Next we let $n^h_{h-1}(t)$ be its $q^{h-1}$ coefficient and subtract $n^h_{h-1}(t)\(q+[2]\_t+q^{-1}\)^{h-1}$. Repeating until we reach $n^h_0(t)$ gives the formula claimed.
\end{proof}

Thus we find
$$
\sum_{\chi=1-h}^\infty P^h_\chi(t)\,q^\chi\=
\sum_{g=0}^h\,n^h_g(t)\(q^{-1}+[2]\_t+q\)^{g-1}.
$$
The bracketed terms on the right are palindromic Laurent polynomials in $q$ for $g\ge1$. Therefore replacing $P^h_\chi(t)$ by $P^h_\chi(t)-P^h_{-\chi}(t)$ on the left hand side (thus replacing the coefficient of $q^\chi$ by the difference between the cofficients of $q^\chi$ and $q^{-\chi}$) kills these terms, leaving only the $g=0$ term
\begin{eqnarray*}
\sum_{\chi\ge1}\(P^h_\chi(t)-P^h_{-\chi}(t)\)q^\chi &=& n^h_0(t)\,q\(1+[2]\_tq+q^2\)^{-1} \\
&=& n^h_0(t)\sum_{\chi\ge1}(-1)^{\chi-1}[\chi]\_t\;q^\chi.
\end{eqnarray*}
Recalling the definition \eqref{tnorm} of $P^h_\chi(t)$ and comparing the above formula to \eqref{vw1} gives
$$
\VW_{(0,\beta,\chi)^1}(t)\=n_0^h(t),
$$
dependent only on $\beta^2=2h-2$. By Proposition \ref{prop} this proves Theorem \ref{VWt}.

\begin{rmk}
Finally notice that substituting $q=-t^{1/2}$ into \eqref{bps} also kills all but the $g=0$ term, so the same substitution in the infinite product recovers the generating series \eqref{jac} for these invariants $\VW_{(0,\beta,\chi)^1}(t)$,
\beq{Kkv}
\sum_{h=0}^\infty n_0^h(t)u^h\=\prod_{n\ge1}\frac{1}{(1-u^n)^{20}(1-tu^n)^2(1-t^{-1}u^n)^2}\,.
\eeq

By the KKV formula \cite{kkv}, proved in \cite{KKV}, the integer Gopakumar-Vafa invariants $n_g(\beta)$\,---\,counting curves in an algebraic class $\beta$ in $S$ with $\lambda_g$-insertion, or on $X$ with the reduced obstruction theory\,---\,are governed by the exact same product formula. Since $n_g(\beta)$ also depends on $\beta$ only through its square $\beta^2=2h-2$ we write $n_g^h:=n_g(\beta)\in\Z$. Then it follows from \eqref{Kkv} that
$$
\VW_{(0,\beta,\chi)^1}(t)\=n_0^h(t)\=\sum\nolimits_{g=0}^h(-1)^g\,n^h_g\,\(t^{-1}-[2]\_t+t\)^g.
$$
I imagine this is \emph{not} a direct relationship, but is due to some duality (such as hyperk\"ahler rotation, P$\,=\,$W, mirror symmetry, or even coincidence) special to (local) $K3$ surfaces. In general one would expect the $\chi\_{-t}$ (or Nekrasov-Okounkov) refinement of genus 0 BPS invariants to precisely \emph{not} govern higher genus numerical BPS invariants, since $\chi\_{-t}$ annihilates the Jacobian of a geometric genus $g>0$ curve.
\end{rmk}

\bibliographystyle{halphanum}
\bibliography{references}

\medskip \noindent {\tt{richard.thomas@imperial.ac.uk}} \medskip

\noindent Department of Mathematics \\
\noindent Imperial College London\\
\noindent London SW7 2AZ \\
\noindent United Kingdom

\end{document}